\newtheorem{theorem}{Theorem}
\newtheorem{lemma}[theorem]{Lemma}
\newtheorem{prop}[theorem]{Proposition}
\theoremstyle{definition}
\newtheorem{remark}[theorem]{Remark}
\newtheorem*{problem}{The Commuting Problem}
\newcommand{\disk}{\mathbb{D}}
\newcommand{\C}{\mathbb{C}}
\newcommand{\D}{\mathbb{D}}
\newcommand{\K}{\mathbb{K}}
\newcommand{\R}{\mathbb{R}}
\renewcommand{\H}{\mathbb{H}}
\begin{document}

\title{Commutants of Toeplitz Operators with Harmonic Symbols}

\author{Trieu Le}
\address{Department of Mathematics and Statistics, Mail Stop 942, University of Toledo, Toledo, OH 43606}
\email{Trieu.Le2@utoledo.edu}

\author{Akaki Tikaradze}
\address{Department of Mathematics and Statistics, Mail Stop 942, University of Toledo, Toledo, OH 43606}
\email{Akaki.Tikaradze@utoledo.edu}

\subjclass[2010]{Primary 47B35}
\keywords{Toeplitz operator; Bergman space; commuting problem}

\begin{abstract}

We make a progress towards describing the commutants of Toeplitz
operators with harmonic symbols on the Bergman space over the unit disk. Our work greatly generalizes several partial results in the field.

\end{abstract}

\maketitle

\section{Introduction}

Let $\disk$ denote the open unit disk
in the complex plane and $dA$ be the normalized Lebesgue area measure on
$\disk$. As usual, $L^2(\disk)$ is the space of complex-valued measurable functions $h$ on $\disk$
such that 
\[\|h\|_2=\Big(\int_{\disk}|h(z)|^2 dA(z)\Big)^{1/2} < \infty.\]
The Bergman space $A^2(\disk)$ is the closed subspace
of $L^2(\disk)$ consisting of holomorphic functions. Let $P : L^2(\disk)\to A^2(\disk)$ denote
the orthogonal projection. For a function $f\in L^{\infty}(\disk),$ one defines the Toeplitz operator $T_f : A^2(\disk)\to A^2 (\disk)$ as 
\[T_f(h) = P(fh)\ \text{ for all } \ h\in A^2(\disk).\]
It is clear that $T_{f}$ is bounded with $\|T_{f}\|\leq\|f\|_{\infty}$. 

Various algebraic problems related to Toeplitz operators have been extensively studied in the literature (see, e.g. \cite{ZhuAMS2007} and the references therein). One of the most interesting problems in the field is the commuting problem. Recall that two operators $A$ and $B$ commute if and only if their commutator $[A,B]=AB-BA$ vanishes.

\begin{problem}
Given a non-constant function $f\in L^{\infty}(\disk)$, determine all $g\in L^{\infty}(\disk)$ such that $[T_g,T_{f}]=0$ on the Bergman space.
\end{problem}

This problem was motivated by the same problem for Toeplitz operators on the Hardy space over the unit disk, which was solved by Brown and Halmos in their seminal paper \cite{Brown1963}. 
On the Bergman space, the Commuting Problem has been completely solved when $f$ is holomorphic or radial. More specifically, Axler, {\v{C}}u{\v{c}}kovi{\'c} and Rao \cite{ACR} showed
\begin{theorem}
\label{T:ACR}
Let $f$ be a non-constant bounded holomorphic function.  If $[T_{g},T_{f}]=0$ on $A^2(\disk)$, then $g$ must be holomorphic as well.
\end{theorem}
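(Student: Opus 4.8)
The plan is to exploit the special feature that, for a bounded holomorphic $f$, the Toeplitz operator $T_f$ is simply the multiplication operator $M_f$ on $A^2(\disk)$, so that $T_f^{\ast}=T_{\bar f}$ carries the reproducing kernels as eigenvectors. Writing $k_w(z)=(1-\bar w z)^{-2}$ for the (unnormalized) Bergman kernel, the reproducing property together with the holomorphy of $f$ gives $\langle T_{\bar f}k_w,h\rangle=\langle k_w,fh\rangle=\overline{f(w)}\,\overline{h(w)}$ for every $h$, i.e.\ $T_{\bar f}\,k_w=\overline{f(w)}\,k_w$ for all $w\in\disk$. I would record this first, since it converts the spectral data of $T_f$ into something completely explicit.

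Next I would dualize the hypothesis. Taking adjoints in $[T_g,T_f]=0$ yields $[T_{\bar g},T_{\bar f}]=0$, so $T_{\bar g}$ leaves invariant every eigenspace of $T_{\bar f}$; applied to $k_w$ this says $T_{\bar g}k_w\in\ker\!\big(T_{\bar f}-\overline{f(w)}\big)$. The crux of the argument is to show that, for $w$ in a suitable open set, this kernel is exactly the line $\C\,k_w$, so that $T_g^{\ast}k_w=T_{\bar g}k_w=c(w)\,k_w$ for a scalar $c(w)$. Granting this, for every $h\in A^2(\disk)$,
\[
(T_g h)(w)=\langle T_g h,k_w\rangle=\langle h,T_g^{\ast}k_w\rangle=\overline{c(w)}\,h(w),
\]
and taking $h\equiv 1$ identifies $\overline{c(w)}=(T_g 1)(w)=(Pg)(w)=:\psi(w)$, a holomorphic function. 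Hence $(T_g h)(w)=\psi(w)h(w)$ on an open set of $w$; since $T_g h$ and $\psi h$ are holomorphic and agree on a set with a limit point, they agree everywhere, so $T_g=M_\psi$. Boundedness of $M_\psi$ forces $\psi$ to be bounded, whence $T_g=T_\psi$, and by injectivity of the symbol map on $L^\infty(\disk)$ we conclude $g=\psi$ is holomorphic.

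The main obstacle is the eigenspace computation, i.e.\ controlling $\ker\!\big(T_{\bar f}-\overline{f(w)}\big)=A^2(\disk)\ominus\overline{(f-f(w))A^2(\disk)}$. One always has $\C\,k_w\subseteq\ker\!\big(T_{\bar f}-\overline{f(w)}\big)$, and in fact $k_{w'}$ lies in this kernel for \emph{every} $w'$ in the fiber $f^{-1}(f(w))$. Two difficulties appear: (i) $f$ need not be univalent, so a generic fiber may contain several points $w=w_1,\dots,w_m$, and a priori $T_{\bar g}k_w$ could spread across $\mathrm{span}\{k_{w_1},\dots,k_{w_m}\}$; and (ii) one must exclude extra orthocomplement coming from the boundary behaviour of $f-f(w)$. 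For a value $f(w)$ having a single simple preimage the kernel is one–dimensional and the argument above closes immediately; this already settles univalent $f$.

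To treat the general regular fiber I would let $\lambda$ range over a small disk of regular values, parametrize the preimages $w_1(\lambda),\dots,w_m(\lambda)$ holomorphically by the inverse function theorem, and study the matrix $C(\lambda)$ representing $T_{\bar g}$ on $\mathrm{span}\{k_{w_j(\lambda)}\}$, which gives $(T_g h)(w_i)=\sum_j\overline{C_{ji}(\lambda)}\,h(w_j)$. The required statement is that $C(\lambda)$ is diagonal. This cannot come from the fiberwise commutation alone, since every matrix commutes with the scalar $\lambda I$ by which $M_f$ acts on the fiber; it must instead be forced by the global nature of $T_g$ as a single bounded Toeplitz operator with $L^\infty$ symbol, using the analytic dependence of $w_j(\lambda)$ and $C(\lambda)$ together with the demand that $h\mapsto T_g h$ produce a genuine holomorphic function for \emph{every} $h$. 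I expect this diagonalization — equivalently, the exact identification of the eigenspaces of $T_{\bar f}$ with the span of fiber kernels and the vanishing of the off-diagonal couplings — to be where essentially all of the work lies.
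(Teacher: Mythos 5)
You should first be aware that the paper contains no proof of this statement: it is Theorem \ref{T:ACR}, quoted from Axler, {\v{C}}u{\v{c}}kovi{\'c} and Rao \cite{ACR} and used as a black box, so the only possible comparison is with the published proof in \cite{ACR}. As written, your proposal is not a proof. The conditional skeleton is sound: granting that $\ker\big(T_{\bar f}-\overline{f(w)}\big)=\C\,k_w$ for all $w$ in some open set, your chain $T_{\bar g}k_w=c(w)k_w$, hence $T_gh=\psi h$ with $\psi=Pg$, hence $T_g=T_\psi$ and $g=\psi$ by injectivity of the symbol map, is correct. Everything therefore hinges on the eigenspace identification, and that is exactly what is missing.

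Moreover, the step you claim ``closes immediately'' is false. Since $T_{\bar f}-\overline{f(w)}=M_{f-f(w)}^{*}$, its kernel is $A^2(\disk)\ominus\overline{(f-f(w))A^2(\disk)}$, and in the Bergman space this closed subspace is \emph{not} determined by the zeros of $f-f(w)$: cyclicity can fail for boundary reasons. Concretely, let $S(z)=\exp\big(-\tfrac{1+z}{1-z}\big)$ be the atomic singular inner function and $f(z)=zS(z)$. Then $f$ is bounded and holomorphic, and $f^{-1}(0)=\{0\}$ is a single simple preimage because $S$ is zero-free; yet $\overline{fA^2}\subsetneq zA^2$, so $\ker T_{\bar f}\supsetneq\C\,k_0$. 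Indeed, if $\overline{fA^2}$ equalled $zA^2$, then $zA^2\subseteq\overline{SA^2}$, forcing the invariant subspace $\overline{SA^2}$ to have codimension at most one; codimension zero is impossible because $S$ is a classical example of a bounded, zero-free, non-cyclic function in $A^2(\disk)$ (its singular mass sits on the Beurling--Carleson set $\{1\}$; this is the easy half of the Korenblum--Roberts cyclicity theorem), and codimension one is impossible because the only codimension-one invariant subspaces are $\{h:h(\lambda)=0\}$, $\lambda\in\disk$, which cannot contain the zero-free $S$. So the ``boundary behaviour'' difficulty (ii) that you flagged and then set aside is fatal already for a single simple preimage. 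Beyond this, the piece you yourself identify as ``where essentially all of the work lies''---the diagonalization of $C(\lambda)$ over multi-point fibers---is offered as an expectation, not an argument, and the finite-matrix framework is itself not general, since fibers of a bounded holomorphic function can be infinite (e.g.\ infinite Blaschke products). To prove the theorem along your lines you would need genuine Bergman-space invariant-subspace and cyclicity input to control these eigenspaces; the published proof in \cite{ACR} is short and proceeds along entirely different lines, never needing to identify eigenspaces of $T_{\bar f}$.
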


Recall that a function $f$ is radial if there exists a function $\varphi$ defined on $[0,1)$ such that $f(z)=\varphi(|z|)$ for a.e. $z\in\disk$. Among other things, {\v{C}}u{\v{c}}kovi{\'c} and Rao \cite{CR} proved that if $f$ is non-constant radial and $[T_{g},T_{f}]=0$, then $g$ must be radial as well.

Besides these complete results, only partial answers to the Commuting Problem have been known for other cases. Axler and {\v{C}}u{\v{c}}kovi{\'c} \cite{AC} offered the following result, which we state here in a slightly different form from the original statement.
\begin{theorem}
\label{T:AC}
Let $f$ be a bounded harmonic function which is not holomorphic or conjugate holomorphic. If $g$ is bounded harmonic and $[T_g,T_f]=0$ on $A^2(\disk)$, then $g=af+b$ for some $a,b\in\C$.
\end{theorem}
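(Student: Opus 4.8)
The plan is to strip the symbols into their holomorphic and conjugate-holomorphic parts, turn the vanishing of the commutator into a single scalar identity via the Berezin transform, and then invoke a holomorphic-versus-antiholomorphic rigidity to read off the affine relation. First I would write $f=f_{1}+\overline{f_{2}}$ and $g=g_{1}+\overline{g_{2}}$ with $f_{1},f_{2},g_{1},g_{2}$ holomorphic and $f_{2}(0)=g_{2}(0)=0$. Since $f,g\in L^{\infty}(\D)\subseteq L^{2}(\D)$ and $P$ annihilates $\overline{f_{2}},\overline{g_{2}}$, each of $f_{1},f_{2},g_{1},g_{2}$ lies in $A^{2}(\D)$. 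On these symbols $T_{f_{1}}$ is multiplication $M_{f_{1}}$ and $T_{\overline{f_{2}}}=M_{f_{2}}^{*}$ on the dense domain of polynomials, and likewise for $g$. The hypothesis that $f$ is neither holomorphic nor conjugate-holomorphic translates into $f_{1}'\not\equiv 0$ and $f_{2}'\not\equiv 0$, which is the crucial structural input used at the very end.

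Next I would extract a scalar equation from $[T_{g},T_{f}]=0$. Because $M_{f_{1}}$ and $M_{f_{2}}^{*}$ are in general only densely defined (the holomorphic parts of a bounded harmonic function need not be bounded), I would not manipulate them as an operator algebra; instead I would test the identity against reproducing kernels, where $M_{f_{2}}^{*}K_{a}=\overline{f_{2}(a)}\,K_{a}$ and every inner product converges. Computing $\langle [T_{g},T_{f}]k_{a},k_{a}\rangle=0$ with the normalized kernels $k_{a}$, the diagonal terms $T_{f_{1}}T_{g_{1}}=T_{g_{1}}T_{f_{1}}$ and $T_{\overline{f_{2}}}T_{\overline{g_{2}}}=T_{\overline{g_{2}}}T_{\overline{f_{2}}}$ cancel, and one is left with the Berezin-transform identity $B h = h$, where $h:=g_{1}\overline{f_{2}}-f_{1}\overline{g_{2}}$. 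By Cauchy--Schwarz $h\in L^{1}(\D,dA)$, so this fixed-point equation is legitimate.

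Third, I would invoke Berezin rigidity: an $L^{1}$ function fixed by the Berezin transform is $\mathcal M$-harmonic, and on the \emph{disk} $\mathcal M$-harmonicity coincides with ordinary harmonicity because $\widetilde{\Delta}=(1-|z|^{2})^{2}\Delta$ is a nonvanishing multiple of the Laplacian (the discrepancy that plagues the ball in higher dimension simply does not occur here). Hence $h$ is harmonic, i.e. $\partial_{z}\partial_{\bar z}h=0$; since $\partial_{z}\partial_{\bar z}(g_{1}\overline{f_{2}})=g_{1}'\overline{f_{2}'}$ and similarly for the other term, this is exactly the pointwise identity $g_{1}'\,\overline{f_{2}'}=f_{1}'\,\overline{g_{2}'}$ on $\D$. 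Finally, on the connected open set where $f_{1}'\neq 0$ and $f_{2}'\neq 0$ (the complement of a discrete set, nonempty precisely because $f_{1}',f_{2}'\not\equiv 0$) I would rewrite this as $g_{1}'/f_{1}'=\overline{g_{2}'/f_{2}'}$: the left side is holomorphic, the right antiholomorphic, so each equals a constant $c$. Thus $g_{1}'=c\,f_{1}'$ and $g_{2}'=\bar c\,f_{2}'$ throughout $\D$; integrating and using $f_{2}(0)=g_{2}(0)=0$ gives $g_{1}=c f_{1}+b$ and $g_{2}=\bar c f_{2}$, whence $g=g_{1}+\overline{g_{2}}=c(f_{1}+\overline{f_{2}})+b=cf+b$, which is the assertion with $a=c$.

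The hard part is the middle passage, reducing the operator equation to $Bh=h$ while keeping everything rigorous: the natural decomposition $T_{f}=M_{f_{1}}+M_{f_{2}}^{*}$ involves unbounded summands, so one must work entirely through reproducing kernels (or normalized kernels) to guarantee convergence and to justify cancelling the diagonal contributions. The accompanying analytic point — that an $L^{1}$ fixed point of the Berezin transform on $\D$ is harmonic — is where I would lean on known results; once $h$ is harmonic, the holomorphic/antiholomorphic ratio argument and the role of the hypothesis $f_{1}',f_{2}'\not\equiv 0$ make the affine conclusion essentially immediate.
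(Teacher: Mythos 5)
A point of order first: the paper you were handed never proves this statement --- it is quoted as a known result with a citation to Axler and \v{C}u\v{c}kovi\'c \cite{AC} --- so the comparison has to be made against the original proof in \cite{AC}, not against anything in this paper.

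Your proposal is correct, and its skeleton is in fact the classical one from \cite{AC}: decompose $f=f_1+\overline{f_2}$, $g=g_1+\overline{g_2}$ with $f_2(0)=g_2(0)=0$ (so $f_1,f_2,g_1,g_2\in A^2(\disk)$, and the hypothesis on $f$ becomes $f_1'\not\equiv 0$, $f_2'\not\equiv 0$); note that the holomorphic--holomorphic and antiholomorphic--antiholomorphic commutators contribute nothing; pair $[T_g,T_f]$ against normalized kernels $k_a$ (legitimate, since $T_f,T_g$ themselves are bounded, $T_{\overline{f_2}}k_a=\overline{f_2(a)}k_a$, and every integral that arises converges because the holomorphic pieces lie in $A^2$ while $k_a\in L^\infty$); this yields exactly $\widetilde{h}=h$ for $h=g_1\overline{f_2}-f_1\overline{g_2}\in L^1(\disk)$; and finally $\partial_z\partial_{\bar z}h=0$ gives $g_1'\overline{f_2'}=f_1'\overline{g_2'}$, so $g_1'/f_1'=\overline{g_2'/f_2'}$ is constant on the connected complement of the zero sets of $f_1'f_2'$, whence $g=cf+b$ after integrating and using the normalization. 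All of these steps check out. The genuine divergence from \cite{AC} is where you place the analytic weight: you dispose of the middle step by citing the rigidity theorem that an $L^1(\disk,dA)$ fixed point of the Berezin transform is $\mathcal{M}$-harmonic, hence harmonic, since the invariant Laplacian on the disk is $(1-|z|^2)^2\Delta$. That theorem is true, but it is deep (Engli\v{s}, J.~Funct.~Anal.~1994; cf.\ Ahern--Flores--Rudin 1993 for the ball, where the implication actually fails in high dimensions), and it postdates \cite{AC}: Axler and \v{C}u\v{c}kovi\'c could not invoke it and instead extracted the needed conclusion for their special bilinear $h$ by a direct argument. So your proof is complete only modulo that citation, which is carrying essentially all of the difficulty; what you buy in exchange is a shorter, more conceptual argument that isolates the fixed-point equation $\widetilde h = h$ as the heart of the matter, and that a referee would accept as a proof by reference provided the citation is stated precisely ($L^1$ hypothesis, one-dimensional case).
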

The problem is still wide open when $f$ is assumed to be harmonic but the harmonicity of $g$ is dropped. There have been several partial results in this direction. In order to describe those results, we need the decomposition of $L^2(\disk)$ as an orthogonal direct sum of homogeneous pieces (see, e.g. \cite{CR}). Given a function $g\in L^{2}(\disk)$, we may write 
\begin{align}
\label{Eqn:polar_decomp}
g(z)=g(re^{i\theta})=\sum_{k=-\infty}^{\infty}g_k(r)e^{ik\theta}
\end{align}
for $z=re^{i\theta}\in\disk$.
The series converges in the norm of $L^2(\disk)$ and for each integer $k$, the function $g_k$ is defined a.e. on $[0,1)$ by
\begin{align}
\label{Eqn:g_k}
g_k(r) = \frac{1}{2\pi}\int_{0}^{2\pi}g(re^{i\theta})\,e^{-ik\theta}\,d\theta.
\end{align}
Note that if $g$ is bounded on $\disk$, then all $g_k$ are bounded on $[0, 1)$. 

{
For a function $g$ in $L^1(\disk)$, the integral in \eqref{Eqn:g_k} still exists and defines $g_k$ a.e. on $[0,1)$ for each integer $k$. In fact, all $g_k$ belongs to $L^1([0,1),rdr)$. We shall call $g\in L^1(\disk)$ a \textit{right-terminating} function if there exists an integer $n$ such that 
\[g(re^{i\theta}) = \sum_{k=-\infty}^{n}g_k(r)e^{ik\theta},\]
where the series converges in $L^1(\disk)$.}

It was shown in \cite{LRY} that if $f(z)=z+\bar{z}$ and $g$ is a right-terminating bounded function such that $[T_g,T_f]=0$, then the conclusion of Theorem \ref{T:AC} holds. That is, $g=af+b$ for some complex constants $a$ and $b$. In his thesis \cite[Theorem 3.4.3]{Y}, Yousef improved this result by considering $f$ of the form $f(z)=z+\bar{\phi}(z)$, where $\phi$ is holomorphic and $\phi'(0)\neq 0$. 

Recall that the Bergman space $A^2(\disk)$ is a reproducing kernel Hilbert space with kernel $K(z,w)=(1-\bar{w}z)^{-2}$. It follows that $T_{f}$ is an integral operator. Indeed, for $h\in A^2(\disk)$, we have
\begin{align}
\label{Eqn:int_formula_TO}
T_f(h)(z) & = \int_{\D}\frac{f(w)h(w)}{(1-\bar{w}z)^2}\,dA(w)\ \text{ for } z\in\disk.
\end{align}
When $f$ is unbounded but belongs to $L^1(\disk)$, the integral on the right-hand side of \eqref{Eqn:int_formula_TO} may still exist, for example, when $h$ is bounded. One then uses \eqref{Eqn:int_formula_TO} to define the Toeplitz operator $T_{f}$ for $L^1$-symbol $f$. Such operator may not be bounded on $A^2(\disk)$ in general. However, if $f$ is supported on a compact subset of the unit disk, then it can be verified that $T_{f}$ is not only bounded but also compact. Consequently, if $f$ belongs to $L^1(\disk)$ and is bounded on an annulus $\{z: r<|z|<1\}$, for some $0<r<1$, then $T_{f}$ is bounded.

A calculation in \cite[Lemma 2]{LRY} shows that
\begin{align*}
   (T_{z+\bar{z}})^2 & = T_{g_1} \text{ and } (T_{z+\bar{z}})^3 = T_{g_2},
\end{align*}
for some unbounded functions $g_1, g_2$ in $L^1(\D)$. We note that \[g_1(z)=z^2+\bar{z}^2+|z|^2+\ln|z|^2+1\] but the formula for $g_2$ is more complicated. Consequently, $T_{g_1}$ and $T_{g_2}$ commute with $T_{f}$ (for $f(z)=z+\bar{z}$) but neither $g_1$ nor $g_2$ can be written in the form $af+b$ for some constant $a,b$. Furthermore, \cite[Theorem 2]{LRY} states that for a right-terminating function $g\in L^1(\disk)$ such that $T_{g}$ is bounded, we have $[T_g,T_{f}]=0$ if and only if $g\in{\rm span}(\{1, f, g_1, g_2\})$.

With the same fashion, the discussion in \cite[Remark 4]{LRY} shows that $(T_{z+\bar{z}^2})^2 = T_{h}$, where $h$ is an unbounded function in $L^1(\disk)$. A similar approach as in \cite[Theorem 2]{LRY} shows that if $g$ is a right-terminating function in $L^1(\disk)$ and $[T_g,T_{z+\bar{z}^2}]=0$, then $g\in{\rm span}(\{1,z+\bar{z}^2,h\})$.

Alsabi and Louhichi in a recent preprint \cite{AL} considered the case $f(z)=z^2+\bar{z}^2$ and $g$ is a right-terminating function in $L^1(\disk)$. Their result asserts that the conclusion of Theorem \ref{T:AC} still holds.

In the present note we study the case $f$ belongs to a certain large class of bounded harmonic functions. Our approach generalizes that of \cite{LRY} and \cite[Section 3.4]{Y}. Our main result is the following.

\begin{theorem}
\label{main}
Let $f$ be a bounded harmonic function of the form \[f(z)=f_1(z)+\bar{z}^{m}\,\overline{f_2(z)},\] where $m$ is a positive integer, $f_1$ is a non-constant holomorphic polynomial and $f_2$ is a holomorphic function on $\disk$ with $f_2(0)\neq 0$. Let $g\in L^1(\disk)$ be a right-terminating function such that $T_g$ is bounded and $[T_g,T_f]=0$ on $A^2(\disk)$. If either $m\geq 3$ or $g$ is bounded, then $g=af+b$ for some $a, b\in \mathbb{C}$.
\end{theorem}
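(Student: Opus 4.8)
The plan is to convert the operator identity $[T_g,T_f]=0$ into a countable family of scalar recurrences for the radial components $g_k$ in the decomposition \eqref{Eqn:polar_decomp}, and then to solve these recurrences from the highest frequency of $g$ downward. First I would record how the two operators act on the monomial basis $\{z^n\}$. Writing $g=\sum_k g_k(r)e^{ik\theta}$ and using \eqref{Eqn:int_formula_TO}, a direct computation gives $T_g(z^n)=\sum_k\gamma_k(n)\,z^{n+k}$, where
\[\gamma_k(n)=2(n+k+1)\int_0^1 g_k(r)\,r^{2n+k+1}\,dr\]
for $n+k\ge 0$ (and $0$ otherwise). For the symbol $f$ the holomorphic part acts by multiplication, $T_{f_1}(z^n)=f_1(z)z^n$, while each coanalytic monomial satisfies $T_{\bar z^{\,l}}(z^n)=\tfrac{n-l+1}{n+1}z^{n-l}$. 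Thus $T_f$ is a finite band of holomorphic shifts $s=0,\dots,N$ (with $N=\deg f_1$) carrying the constant weights $c_s$, together with a coanalytic tail of shifts $s=-l$, $l\ge m$, whose leading weight involves $\bar a_0=\overline{f_2(0)}\neq 0$. Decomposing $[T_g,T_f]$ into its shift-$t$ homogeneous pieces and setting each to zero yields, for every $t$ and every admissible $n$, a master equation of the schematic form
\[\sum_{j=0}^{N}c_j\bigl(\gamma_{t-j}(n+j)-\gamma_{t-j}(n)\bigr)+\sum_{l\ge m}\bigl(\beta_l(n)\gamma_{t+l}(n-l)-\gamma_{t+l}(n)\beta_l(n+t+l)\bigr)=0,\]
where $\beta_l(n)=\bar a_{l-m}\tfrac{n-l+1}{n+1}$.

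Next comes the top-down analysis. Let $n_0$ be the highest frequency of $g$, which is finite because $g$ is right-terminating. In every shift equation with $t>n_0-m$ only the holomorphic band contributes, and the very top equation $t=n_0+N$ collapses to $c_N\bigl(\gamma_{n_0}(n+N)-\gamma_{n_0}(n)\bigr)=0$, forcing $\gamma_{n_0}(\cdot)$ to be $N$-periodic. Since the moments $\int_0^1 g_{n_0}(r)r^{2n+n_0+1}\,dr$ tend to $0$, a periodic sequence of the form $\gamma_{n_0}(n)$ must in fact be constant, and the completeness of $\{r^{2n}\}$ then forces $g_{n_0}(r)=c\,r^{n_0}$; that is, the top component is a holomorphic monomial. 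Descending through $t=n_0+N-1,n_0+N-2,\dots$ and using that the higher components are already monomials (so their difference terms vanish), I would obtain the same conclusion for $g_{n_0-1},g_{n_0-2},\dots$ down to the level at which the coanalytic tail first enters. Hence the ``analytic top'' of $g$ is a holomorphic polynomial.

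I would then switch on the coanalytic coupling. The first shift at which the leading term $\bar a_0\bar z^{\,m}$ acts, $t=n_0-m$, produces a relation tying the (already determined, constant) $\gamma_{n_0}$ to lower components through the nonconstant weights $\beta_m(n)$; because $\bar a_0\neq 0$ this relation is nondegenerate, and it lets me propagate downward so as to simultaneously (i) force $n_0\le N$ and pin the surviving holomorphic frequencies to $a f_1$ plus a constant, and (ii) generate the negative-frequency components and identify them with those of $a\,\bar z^{\,m}\,\overline{f_2}$. The hypothesis $f_2(0)\neq 0$ is exactly what makes step (ii) self-starting.

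The main obstacle, and the reason for the hypothesis ``$m\ge 3$ or $g$ bounded,'' lies in this last stage. The recurrences governing the negative-frequency moments $\gamma_{-l}(n)$ are inhomogeneous linear recurrences in $n$ whose homogeneous solutions can grow polynomially in $n$; these growing modes correspond precisely to the extra, unbounded symbols exhibited in \cite{LRY} (namely $g_1,g_2$ for $z+\bar z$ and $h$ for $z+\bar z^{2}$). The heart of the proof is therefore to exclude such parasitic solutions. When $g$ is assumed bounded, every $g_k$ is bounded on $[0,1)$, which caps the admissible growth of $\gamma_{-l}(n)$ and kills the parasitic modes directly; when $m\ge 3$, the shift by at least three separates the interacting bands enough that the parasitic moment sequences become incompatible with membership of $g$ in $L^1(\disk)$ together with right-termination. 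Carrying out this exclusion carefully, and verifying that the only remaining admissible solution is $g_k=af_k$ for $k\neq 0$ and $g_0=af_0+b$, completes the proof.
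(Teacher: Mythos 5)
Your overall frame --- expand $g$ into frequency components, turn $[T_g,T_f]=0$ into moment equations on the monomial basis, and work downward from the top frequency --- is the same as the paper's, but two of your steps have genuine gaps. The first is the claim that $N$-periodicity of $\gamma_{n_0}(n)$ plus the decay of the moments forces $\gamma_{n_0}$ to be constant. This is a non sequitur: periodicity only says $\gamma_{n_0}$ is constant on each residue class mod $N$, and the decay $\int_0^1 g_{n_0}(r)r^{2n+n_0+1}\,dr\to 0$ is automatic for any $g_{n_0}\in L^1([0,1),rdr)$, hence compatible with $\gamma_{n_0}$ taking $N$ distinct values. What actually closes this step is complex analysis: $\gamma_{n_0}$ is the restriction to integers of $G(\zeta)=(2\zeta+2n_0+2)\widehat{g_{n_0}}(2\zeta+n_0+2)$, a polynomially bounded holomorphic function on a half-plane, and a Carlson-type uniqueness theorem (Lemma \ref{L:uniquiness_set}, packaged as Lemma \ref{L:preriodic_hol}) shows that such a function which is periodic on the large integers is constant. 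Nothing in your proposal substitutes for this input (the paper, for this particular step, simply invokes Theorem \ref{T:ACR}).

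The second gap is fatal: the heart of the theorem --- why ``$m\geq 3$ or $g$ bounded'' excludes the parasitic solutions --- is precisely the part you leave as a plan, and the mechanism you propose for $m\geq 3$ is wrong. The symbols $g_1,g_2$ (for $z+\bar z$) and $h$ (for $z+\bar z^2$) from \cite{LRY} are themselves in $L^1(\D)$ and right-terminating, so ``incompatibility with membership of $g$ in $L^1(\D)$ together with right-termination'' cannot be the obstruction that first appears at $m=3$. In the paper the exclusion is structural, via Proposition \ref{key}: the coupling of the top anti-analytic term with the analytic top yields $[T_{g_s(r)e^{is\theta}},T_{z^l}]=ac\,[T_{\bar z^m},T_{z^n}]$, equivalently a difference equation $G(\zeta+l)-G(\zeta)=mn/\bigl((\zeta+1)(\zeta+n+1)\bigr)$ on the closed half-plane $\Re\zeta\geq p/2$ (with $p=-s$). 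Solving it by an explicit meromorphic $F$, using Lemma \ref{L:identity_result} to propagate the identity, demanding that the pole at $\zeta=-1$ avoid that half-plane, and evaluating at the boundary integers forces either $p=m$ and $l=n$, or anomalous cases which intrinsically satisfy $m\leq 2$ --- no integrability of $g$ is involved, which is why $m\geq 3$ alone suffices. Boundedness of $g$ enters by a separate route: it makes $\widehat{\phi}$ holomorphic on all of $\H$ and, via \eqref{Eqn:Mellin_estimate}, rules out the anomalous cases even when $m\leq 2$. You have no substitute for this analysis. Finally, your endgame (directly identifying the negative-frequency components of $g$ with those of $a\bar z^m\overline{f_2}$) is both unproved and unnecessary: the paper sidesteps it with a subtraction trick --- having shown that every non-constant commutant has top term $az^l$ with $a\neq 0$, it applies this to $\tilde g=g-af$, which has no such term, so $\tilde g$ must be constant.
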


\begin{remark}
Given $f$ a right-terminating bounded harmonic function which is not holomorphic or conjugate holomorphic, Theorem \ref{main} shows that if $g$ is a right-terminating \textit{bounded} function and $T_g$ commutes with $T_{f}$, then $g$ must be a linear combination of $f$ and a constant. In the case $g$ is allowed to be unbounded, the examples involving $z+\bar{z}$ and $z+\bar{z}^2$ discussed earlier shows that the condition $m\geq 3$ is indeed necessary.
\end{remark}

\section{The main result}
 
Recall that for all integers $p$ and non-negative integers $m$, we have the formulas (see, e.g. \cite[Lemma 1]{LRY})
\begin{align}
\label{Eqn:T_formulaA}
T_{\bar{z}^m}(z^k) & = 
\begin{cases}
\frac{k-m+1}{k+1}z^{k-m} & \text{ if } k\geq m-1\\
0 & \text{ if } 0\leq k \leq m-2
\end{cases},
\end{align}
and 
\begin{align}
\label{Eqn:T_formulaB}
T_{e^{-ip\theta} \phi(r)}(z^k) & = 
\begin{cases}
(2n-2p+2)\widehat{\phi}(2k-p+2)z^{k-p} & \text{ if } k\geq \max\{0,p\}\\
0 & \text{ if } 0\leq k\leq p-1
\end{cases},
\end{align}
where $\widehat{\phi}(\zeta)=\int_{0}^{1} \phi(r)r^{\zeta-1}dr$ is the Mellin transform of $\phi$. 

Let $\H$ denote the right-half plane which consists of all complex numbers with positive real parts:
\[\H=\{\zeta\in\C: \Re(\zeta)>0\}.\] We shall write $\overline{\H}$ for the closure of $\H$ as a subset of the complex plane. 

For $\phi\in L^1([0,1),rdr)$, it follows from Fubini's and Morera's Theorems that the Mellin transform $\widehat{\phi}$ is holomorphic on $2+\H$, the half-plane consisting of all complex numbers $\zeta$ with $\Re(\zeta)>2$. Furthermore, $\widehat{\phi}$ is continuous and bounded on $2+\overline{\H}$.

On the other hand, for $\phi$ bounded, $\widehat{\phi}$ is holomorphic on $\H$ and for all $\zeta\in\H$, we have the estimate
\begin{align}
\label{Eqn:Mellin_estimate}
|\widehat{\phi}(\zeta)| \leq \frac{\|\phi\|_{\infty}}{\Re(\zeta)}.
\end{align}

We recall the following classical result \cite[p.102]{Rem} on the zero set of polynomially bounded holomorphic functions on $\H$. It was originally proved for bounded holomorphic functions but by replacing $h(z)$ by $h(z)/(z+1)^N$ for some sufficiently large integer $N$, the result remains true as stated.

\begin{lemma}
\label{L:uniquiness_set}
Let $h$ be a polynomially bounded holomorphic function on $\H$ that vanishes at the pairwise distinct points $a_1, a_2, \ldots$. If $\inf_{j\geq 1}\{|a_j|\} > 0$ and $\sum_{j=1}^{\infty}\Re(1/a_j) = \infty$, then $h$ is identically zero.
\end{lemma}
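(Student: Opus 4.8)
The plan is to reduce the statement to the classical Blaschke-type uniqueness theorem for \emph{bounded} holomorphic functions on the unit disk $\D$, which is presumably the form in which the cited reference states it, and then transport that theorem to the half-plane via the Cayley transform. The whole argument proceeds by contradiction: assuming $h\not\equiv 0$, I will manufacture a bounded holomorphic function on $\D$ whose zeros violate the Blaschke summability condition, contradicting $\sum_j\Re(1/a_j)=\infty$.

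First I would absorb the polynomial growth. Writing $|h(\zeta)|\le C(1+|\zeta|)^N$ on $\H$ for suitable $C,N$, I note that for $\zeta\in\H$ one has $|\zeta+1|^2=|\zeta|^2+2\Re(\zeta)+1\ge |\zeta|^2+1\ge\tfrac12(1+|\zeta|)^2$, the last step being equivalent to $(|\zeta|-1)^2\ge 0$. Hence
\[
H(\zeta)=\frac{h(\zeta)}{(\zeta+1)^N}
\]
is bounded on $\H$, and since $\zeta+1$ has no zeros in $\H$, the function $H$ vanishes at exactly the points $a_1,a_2,\ldots$. Next I would push $H$ to the disk through the Cayley transform $\beta(\zeta)=(\zeta-1)/(\zeta+1)$, a conformal map of $\H$ onto $\D$ with inverse $w\mapsto(1+w)/(1-w)$. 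The composition $g(w)=H\big((1+w)/(1-w)\big)$ is a bounded holomorphic function on $\D$ vanishing precisely at the points $b_j=\beta(a_j)$. If $h$, hence $H$, hence $g$, is not identically zero, the classical theorem forces the Blaschke condition $\sum_j(1-|b_j|)<\infty$.

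The crux is then to translate this back into a statement about $\sum_j\Re(1/a_j)$, and this is exactly where the hypothesis $\inf_j|a_j|>0$ enters. A direct computation gives
\[
1-|b_j|^2=\frac{|a_j+1|^2-|a_j-1|^2}{|a_j+1|^2}=\frac{4\,\Re(a_j)}{|a_j+1|^2},
\]
while $\Re(1/a_j)=\Re(a_j)/|a_j|^2$. Putting $\delta=\inf_j|a_j|>0$ and using $|a_j+1|\le|a_j|+1\le(1+\delta^{-1})|a_j|$, I obtain
\[
\Re\!\left(\frac{1}{a_j}\right)=\frac{\Re(a_j)}{|a_j|^2}\le(1+\delta^{-1})^2\,\frac{\Re(a_j)}{|a_j+1|^2}\le\frac{(1+\delta^{-1})^2}{2}\,\big(1-|b_j|\big),
\]
where the final step uses $1-|b_j|^2\le 2(1-|b_j|)$. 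Summing over $j$ bounds $\sum_j\Re(1/a_j)$ by a constant multiple of the convergent Blaschke sum, contradicting the hypothesis. Therefore $g\equiv 0$, and so $h\equiv 0$.

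I expect the only genuine subtlety to be the comparison in the last display, and in particular the indispensability of the lower bound $\inf_j|a_j|>0$. Without it one could take $a_j=1/j^2\to 0$, for which $\sum_j\Re(1/a_j)=\sum_j j^2=\infty$ yet the images satisfy $b_j\to-1$ with $\sum_j(1-|b_j|)<\infty$; a nonzero Blaschke product with these zeros then pulls back to a nonzero bounded holomorphic function on $\H$ vanishing on $\{a_j\}$, so the conclusion would fail. Recognizing that $|a_j+1|$ and $|a_j|$ are comparable exactly under the stated hypothesis is the key point, while everything else is the standard conformal transfer between $\H$ and $\D$.
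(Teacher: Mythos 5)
Your proof is correct. The paper itself disposes of this lemma in one sentence: it cites the classical uniqueness theorem for \emph{bounded} holomorphic functions on $\H$ from Remmert's book, and observes that the polynomially bounded case follows by replacing $h(\zeta)$ with $h(\zeta)/(\zeta+1)^N$. Your first step is exactly that same reduction (with the correct estimate $|\zeta+1|\ge\tfrac{1}{\sqrt2}(1+|\zeta|)$ on $\H$ justifying boundedness of $H$); where you diverge is that you do not take the bounded half-plane theorem as a black box but derive it from the Blaschke condition on the disk via the Cayley transform, using the identity $1-|b_j|^2=4\Re(a_j)/|a_j+1|^2$ and the comparability $|a_j+1|\le(1+\delta^{-1})|a_j|$ furnished by the hypothesis $\inf_j|a_j|>0$. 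All steps check out (the $b_j$ are distinct points of $\D$, and the Blaschke sum dominates a constant multiple of $\sum_j\Re(1/a_j)$ exactly as you compute), so your argument is self-contained modulo only the standard disk uniqueness theorem, which is a strictly weaker input than what the paper cites. Your closing example $a_j=1/j^2$, showing that the hypothesis $\inf_j|a_j|>0$ cannot be dropped, is a worthwhile observation that the paper does not make; the only cosmetic slip is the claim that $H$ vanishes ``exactly'' at the $a_j$ (it vanishes at least there, which is all the argument needs, since the Blaschke condition applies a fortiori to any subsequence of the zero set).
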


We shall also make use of the following two facts. Since we have not been able to locate appropriate references, we sketch the ideas of the proofs.

\begin{lemma}
\label{L:identity_result}
Let $G$ be a holomorphic function on an open connected set $\Omega\subset\C$ and $F$ be a meromorphic function on $\C$. Suppose that $G=F$ on a non-empty open subset of $\Omega$. Then the poles of $F$ lie outside of $\Omega$ and $G=F$ on $\Omega$. 
\end{lemma}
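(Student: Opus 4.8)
The plan is to combine the identity theorem with the elementary fact that a holomorphic function cannot blow up. Let $P\subseteq\C$ denote the pole set of $F$. Since $F$ is meromorphic on all of $\C$, the set $P$ is discrete with no accumulation point in $\C$; in particular $P$ is closed and at most countable. The obstruction to applying the identity theorem directly is precisely that $F$ need not be holomorphic on $\Omega$, so the first move is to excise the poles.

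First I would pass to the punctured domain $\Omega^{\circ}=\Omega\setminus P$. Because $P$ is discrete and $\Omega$ is a connected open subset of $\C\cong\R^2$, the set $\Omega^{\circ}$ is open and remains connected: any two of its points are joined by a polygonal path in $\Omega$, and such a path can be perturbed slightly to avoid the at most countably many points of $P$. Both $G$ and $F$ are holomorphic on $\Omega^{\circ}$. Next, let $U$ be the given non-empty open subset of $\Omega$ on which $G=F$. Since $G$ is finite on $U$ and $F$ agrees with it there, $U$ contains no pole of $F$, so $U\subseteq\Omega^{\circ}$. As $G$ and $F$ are holomorphic on the connected open set $\Omega^{\circ}$ and agree on the non-empty open subset $U$, the identity theorem yields $G=F$ throughout $\Omega^{\circ}$.

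It then remains to rule out poles inside $\Omega$. Suppose, toward a contradiction, that some $p\in P$ lies in $\Omega$. As $z\to p$ we have $|F(z)|\to\infty$, since $p$ is a pole, whereas $G$ is continuous at $p$ and hence bounded on a neighborhood of $p$. But $G=F$ on $\Omega^{\circ}$ and points of $\Omega^{\circ}$ accumulate at $p$, so this is impossible. Therefore $P\cap\Omega=\varnothing$, which gives $\Omega^{\circ}=\Omega$, and the equality already established on $\Omega^{\circ}$ becomes $G=F$ on all of $\Omega$.

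The only genuinely delicate point is the reduction itself: because $F$ is merely meromorphic, the identity theorem cannot be invoked on $\Omega$ outright, so one must first remove the poles, argue on the connected punctured domain, and then recover the full statement from the local boundedness of $G$. The one step needing a small topological remark is the connectedness of $\Omega\setminus P$; everything else is a routine application of the identity theorem and the defining property of a pole.
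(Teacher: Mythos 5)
Your proof is correct, but it takes a genuinely different route from the paper's. The paper invokes the global representation of a meromorphic function on $\C$ as a quotient $F=g/f$ of two entire functions with no common zeros (a consequence of the Weierstrass product theorem), then applies the identity theorem to $fG$ and $g$, which are both holomorphic on all of $\Omega$; since $fG=g$ on $\Omega$ and $f,g$ share no zeros, $f$ cannot vanish in $\Omega$, so the poles of $F$ (the zeros of $f$) avoid $\Omega$ and $G=g/f=F$ there. You instead excise the pole set $P$, use the topological fact that removing a closed discrete set from a planar domain leaves it connected, apply the identity theorem on $\Omega\setminus P$, and then exclude poles in $\Omega$ by playing the blow-up characterization of a pole against the local boundedness of $G$. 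Each approach has its advantages: the paper's argument is shorter, needs no connectedness-of-punctured-domain lemma, and concentrates all the work in one citation; yours is more elementary, using only local properties of meromorphic functions, and is correspondingly more general --- it works verbatim if $F$ is merely meromorphic on some open set containing $\Omega$, whereas the quotient representation the paper uses is a genuinely global fact about meromorphic functions on the whole plane. Both arguments are complete; in yours the only point requiring care is the connectedness of $\Omega\setminus P$, which your perturbed-polygonal-path sketch handles adequately.
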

\begin{proof}
Write $F=g/f$, where $f$ and $g$ are entire that do not have common zeroes. The hypothesis and the identity theorem shows that $fG=g$ on $\Omega$. The conclusion then follows.
\end{proof}

\begin{lemma}
\label{L:preriodic_hol}
Let $u$ be a polynomially bounded holomorphic function on a right-half plane (i.e., a translation of $\H$ by a real number). Suppose there exists $r>0$ such that $u(k+r)=u(k)$ for all sufficiently large integers $k$. Then $u$ is a constant function.
\end{lemma}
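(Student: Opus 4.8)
The plan is to first upgrade the discrete periodicity condition to genuine periodicity $u(\zeta+r)=u(\zeta)$ on the whole half-plane, and then to exploit periodicity together with polynomial boundedness. Say $u$ is holomorphic and polynomially bounded on $\{\zeta:\Re(\zeta)>c\}$. I would consider the difference $v(\zeta)=u(\zeta+r)-u(\zeta)$, which is again holomorphic and polynomially bounded on this half-plane: since $r>0$, the shift $\zeta\mapsto\zeta+r$ maps the half-plane into itself, and a sum of two polynomially bounded functions is polynomially bounded. By hypothesis $v(k)=0$ for all integers $k\geq k_0$, where necessarily $k_0>c$ (these integers must lie in the domain). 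After translating the variable by $c$ so that the domain becomes $\H$, I would apply Lemma~\ref{L:uniquiness_set}: the zeros $a_j$ are the real points $k-c$ with $k\geq k_0$, so $\inf_j|a_j|=k_0-c>0$ while $\sum_j\Re(1/a_j)=\sum_{k\geq k_0}1/(k-c)=\infty$ is a divergent harmonic-type series. Hence $v\equiv 0$, i.e. $u$ is periodic with period $r$.

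Next I would extend $u$ to an entire function. For any $\zeta\in\C$, choosing an integer $k$ large enough that $\Re(\zeta)+kr>c$ and setting $\tilde u(\zeta)=u(\zeta+kr)$ gives a well-defined value by periodicity, independent of the (large) choice of $k$; the resulting $\tilde u$ is entire and agrees with $u$ on the original half-plane. Choosing $k$ so that $\Re(\zeta)+kr$ lands in the fixed strip $(c,c+r]$ shows $|\zeta+kr|\leq|\zeta|+(c+r)$, so the polynomial bound for $u$ transfers to a polynomial bound for $\tilde u$ on all of $\C$. A polynomially bounded entire function is a polynomial, and a polynomial with a nonzero period is forced to be constant: $\tilde u-\tilde u(0)$ would otherwise vanish at the infinitely many points $kr$, $k\in\Z$, forcing it to be identically zero. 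Therefore $\tilde u$, and with it $u$, is constant.

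The main technical point to get right is the verification of the hypotheses of Lemma~\ref{L:uniquiness_set} after the translation—in particular that the automatic inequality $k_0>c$ keeps the zeros bounded away from the imaginary axis, and that the divergence of $\sum 1/(k-c)$ is exactly what forces $v\equiv 0$. The remaining steps are routine once periodicity is in hand; the only care needed is checking that the periodic extension stays polynomially bounded, which follows by shifting each point into a fixed vertical strip of width $r$.
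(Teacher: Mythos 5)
Your proof is correct and follows essentially the same route as the paper's: apply Lemma~\ref{L:uniquiness_set} to the difference $u(\zeta+r)-u(\zeta)$ to get genuine periodicity, extend $u$ to a periodic entire function, transfer the polynomial bound via a fundamental strip, and conclude that a polynomially bounded (hence polynomial) periodic entire function is constant. You have merely filled in the details that the paper's sketch leaves implicit (aside from the harmless imprecision that the constant in $|\zeta+kr|\leq|\zeta|+(c+r)$ should be $\max\{|c|,|c+r|\}$ when $c<0$).
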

\begin{proof}
Lemma \ref{L:uniquiness_set} implies that $u(\zeta+r)=u(\zeta)$ for all $\zeta$ belonging to the domain of $u$. It then follows that $u$ extends to an entire function $\tilde{u}$ with period $r$. Since $u$ is polynomially bounded, $\tilde{u}$ is polynomially bounded as well. Consequently, $\tilde{u}$, hence $u$, is constant.
\end{proof}

The following proposition will play a key role in the proof of our main result.
 
\begin{prop}
\label{key} 
Suppose $p$ is an arbitrary integer and $l,m,n$ are positive integers. Suppose $\phi$ is a function in $L^1([0,1),rdr)$ such that
\begin{align*}
[T_{\phi(r)e^{-ip\theta}} , T_{z^l}]=[T_{\bar{z}^m}, T_{z^n}].
\end{align*}
Then one of the following statements hold.
\begin{enumerate}[(a)]
   \item $p=m$ and $l=n$.
   \item $p\in\{0,1\}$ and $m=p+1$.
   \item $p=-1$ and $m=1$. 
\end{enumerate}
If (b) or (c) holds, then $m\leq 2$ and $\phi$ is unbounded on $[0,1)$. 
\end{prop}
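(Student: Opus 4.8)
The plan is to test the operator identity on the monomial basis $\{z^{k}\}_{k\ge 0}$ and convert the resulting scalar identities into analytic statements about the Mellin transform $\widehat{\phi}$. Using $T_{z^{l}}(z^{k})=z^{k+l}$ together with \eqref{Eqn:T_formulaA} and \eqref{Eqn:T_formulaB}, each side applied to $z^{k}$ is a scalar multiple of a single monomial: the left side a multiple of $z^{k+l-p}$ and the right side a multiple of $z^{k+n-m}$. For all large $k$ the truncations in \eqref{Eqn:T_formulaA}--\eqref{Eqn:T_formulaB} are inactive, and a direct computation gives the right-hand coefficient as $\tfrac{mn}{(k+1)(k+n+1)}$, which is nonzero since $m,n\ge 1$. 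Hence the monomials must coincide, forcing $l-p=n-m$, and, writing $c_{j}=\big(2j-2p+2\big)\widehat{\phi}(2j-p+2)$ for the coefficient of $T_{\phi(r)e^{-ip\theta}}$ on $z^{j}$, the coefficients satisfy
\[
\big(2(k+l)-2p+2\big)\widehat{\phi}\big(2(k+l)-p+2\big)-\big(2k-2p+2\big)\widehat{\phi}(2k-p+2)=\frac{mn}{(k+1)(k+n+1)}
\]
for all large integers $k$. The left side is holomorphic and polynomially bounded on the half-plane $\Re\zeta>p/2$, and the integers there form a uniqueness set, so Lemma~\ref{L:uniquiness_set} promotes this to an identity of meromorphic functions; that is, a difference equation for $\widehat{\phi}$ of step $2l$ with rational right-hand side $2m\big(\tfrac{1}{s+p}-\tfrac{1}{s+p+2n}\big)$.

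I would then use this difference equation, rewritten as $(s-p)\widehat{\phi}(s)=(s+2l-p)\widehat{\phi}(s+2l)-2m\big(\tfrac{1}{s+p}-\tfrac{1}{s+p+2n}\big)$, to continue $\widehat{\phi}$ meromorphically to the left and read off its poles: a pole at $s=-p$ with residue $m/p$ (a double pole at $s=0$ when $p=0$, from the collision with the factor $1/(s-p)$), a pole at $s=-p-2n$, and a pole at $s=p$ coming from the $1/(s-p)$ factor. Since $\phi\in L^{1}([0,1),r\,dr)$, its Mellin transform is holomorphic and bounded on $2+\overline{\H}$, so $\widehat{\phi}$ can have no pole with $\Re s\ge 2$. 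Forbidding the pole at $s=-p$ there gives $p\ge -1$, while a pole at $\Re s=p\ge 2$ is allowed only if its residue vanishes. The periodic ambiguity in solving the difference equation is removed exactly as in Lemma~\ref{L:preriodic_hol}: the periodic part is holomorphic and polynomially bounded on a right half-plane, hence constant, which pins down $\widehat{\phi}$ up to one additive constant.

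The decisive input comes from the small values of $k$, where the truncations in \eqref{Eqn:T_formulaA}--\eqref{Eqn:T_formulaB} are active. On the range $\max(0,p)\le k\le m-2$ only $T_{\bar z^{m}}(z^{k})$ is annihilated, so the boundary identity reads $c_{k+l}-c_{k}=\tfrac{k+n-m+1}{k+n+1}$, whereas the continued difference equation forces $c_{k+l}-c_{k}=\tfrac{k+n-m+1}{k+n+1}-\tfrac{k-m+1}{k+1}$; the two are incompatible unless this range is empty, which gives $m\le\max(0,p)+1$. Symmetrically, on $m-1\le k\le p-1$ the identities force the continued values $c_{k}=0$, and substituting the boundary value of $\widehat{\phi}(p+2l)$ into the residue of $\widehat{\phi}$ at $s=p$, namely $2l\,\widehat{\phi}(p+2l)-\tfrac{m}{p}+\tfrac{m}{p+n}$, collapses it to $1-m/p$ in the regime $m\ge p$. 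Forbidding a pole at $s=p$ when $p\ge 2$ then forces $p=m$, i.e. case (a); the exceptional value $m=p+1$ survives only for $p\in\{0,1\}$ (case (b)), and the parallel analysis at $p=-1$ forces $m=1$ (case (c)). The remaining regime $m<p$ must be excluded by driving the forced zeros $\widehat{\phi}(2k-p+2)=0$ through the difference equation into an inconsistency with the explicit (digamma-type) solution; I expect this last bookkeeping to be the main obstacle, because at $k=p-1$ the active truncation branch switches with the sign of $m-p$ and the residue at $s=p$ is automatically annihilated there, so a forbidden pole does not appear directly and one must instead exploit the over-determination.

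Finally, for the unboundedness in (b) and (c) I would argue by contradiction using \eqref{Eqn:Mellin_estimate}: if $\phi$ were bounded, $\widehat{\phi}$ would be holomorphic on $\H$ with $|\widehat{\phi}(s)|\le\|\phi\|_{\infty}/\Re s$. But case (c) ($p=-1$) forces a genuine pole of $\widehat{\phi}$ at $s=1$ with residue $-m\ne 0$; case (b) with $p=1$ forces a pole at $s=1$ with residue $1-m/p=-1\ne 0$ through the $k=0$ boundary identity; and case (b) with $p=0$ forces a double pole at $s=0$ with $\widehat{\phi}(s)\sim -2m/s^{2}$. Each of these contradicts the bound $\|\phi\|_{\infty}/\Re s$, so $\phi$ must be unbounded; as (b) and (c) already stipulate $m\le 2$, this completes the statement.
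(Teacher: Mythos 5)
Your plan follows the same route as the paper: test the identity on monomials to get $p-l=m-n$ and a difference equation for $G(\zeta)=(2\zeta-2p+2)\widehat{\phi}(2\zeta-p+2)$, promote it from the integers to a half-plane, solve it up to an additive constant (the paper's Lemma \ref{L:preriodic_hol} plus the explicit series \eqref{Eqn:formula_G}), and then forbid poles of $\widehat{\phi}$ in the region where it must be holomorphic. Within that framework several of your steps are correct and nicely organized: the incompatibility of the boundary identity with the continued difference equation on $\max(0,p)\le k\le m-2$, giving $m\le\max(0,p)+1$; the residue $1-m/p$ of $\widehat{\phi}$ at $s=p$ in the regime $m\ge p$, which for $p\ge 2$ forces $m=p$ (case (a)); and the residue and double-pole contradictions with \eqref{Eqn:Mellin_estimate} that prove unboundedness of $\phi$ in cases (b) and (c), which parallel the paper's argument for $p=0$ and $p\ge 1$.

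The genuine gap is the one you flag yourself: the regime $p\ge 2$ with $1\le m\le p-1$ is never excluded, and you explicitly defer it ("I expect this last bookkeeping to be the main obstacle"). Without excluding it, the trichotomy (a)/(b)/(c) is not proved, since your bound $m\le\max(0,p)+1$ and your residue argument at $s=p$ say nothing about $m<p$; as you correctly note, in that regime the branch of \eqref{Eqn:comm_A} at $k=p-1$ switches and the residue at $s=p$ vanishes identically, so no forbidden pole appears. The missing idea is a monotonicity observation, and it is exactly how the paper closes this case: by \eqref{Eqn:formula_G}, $G(\zeta)=c-\sum_{j=0}^{\infty}\frac{mn}{(jl+\zeta+1)(jl+\zeta+n+1)}$ is (strictly) increasing on $\K\cap\R$. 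If $p-1\ge m$, then evaluating \eqref{Eqn:comm_A} and \eqref{Eqn:comm_B} at $k=m-1$ and $k=m$ (both at most $p-1$, so the left commutator contributes the single term $G(k+l)$) forces $G(m+l-1)=\frac{n}{m+n}$ and $G(m+l)=\frac{mn}{(m+1)(m+n+1)}$; cross-multiplying shows the first exceeds the second by a positive quantity proportional to $2m+n+1$, contradicting monotonicity. Note that $m+l-1=n+p-1$ and $m+l=n+p$ both lie in $\K\cap\R$, so no meromorphic continuation is even needed here. In your own language, the forced zeros $G(m-1)=G(m)=0$ of the continued solution (which do exist, since $n=l+m-p\ge 1$ forces $l\ge p-m+1\ge 2$) are two distinct zeros of a strictly increasing function, which is the same contradiction; but the monotonicity of the explicit "digamma-type" solution is the ingredient your proposal lacks, and with it your outline closes completely.
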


\begin{proof}
A direct calculation using \eqref{Eqn:T_formulaA} gives
\begin{align}
\label{Eqn:comm_A}
[T_{\bar{z}^m},T_{z^n}](z^k)  = \begin{cases}
\frac{mn}{(k+1)(k+n+1)}\,z^{k+n-m} & \text{ if } k\geq m\\
\frac{k+n-m+1}{k+n+1}\,z^{k+n-m} & \text{ if } m-n\leq k \leq m-1.\\
0 & \text{ otherwise}
\end{cases}
\end{align}
Similarly, using \eqref{Eqn:T_formulaB}, we obtain
\begin{align}
\label{Eqn:comm_B}
[T_{\phi(r)e^{-ip\theta}} , T_{z^l}](z^k) 
= \begin{cases}
\big(G(k+l)-G(k)\big)z^{k+l-p} & \text{ if } k\geq p\\
G(k+l)\,z^{k+l-p} & \text{ if } p-l\leq k\leq p-1.\\
0 & \text{ otherwise}
\end{cases}
\end{align}
Here, \[G(\zeta)=(2\zeta-2p+2)\widehat{\phi}(2\zeta-p+2).\]
Since $\phi\in L^1([0,1),rdr)$, the function $G$ is holomorphic on the half plane $\K=\{\zeta:\Re(\zeta)>p/2\}=p/2+\H$ and it is continuous and polynomially bounded on $\overline{\K}$. 

Because the two commutators are equal, $p-l=m-n$ and we have
\begin{equation}
\label{Eqn:diff_eqn_G}
G(\zeta+l)-G(\zeta)=\frac{mn}{(\zeta+1)(\zeta+n+1)} \ \text{ for all } \zeta=k\geq\max\{m,p\}.
\end{equation}
Lemma \ref{L:identity_result} shows that \eqref{Eqn:diff_eqn_G} holds for all $\zeta\in\overline{\K}$. Define
\[F(\zeta) = -\sum_{j=0}^{\infty}\frac{mn}{(\zeta+jl+1)(\zeta+jl+n+1)}.\]
Then $F$ is a meromorphic function on the complex plane with poles a subset of the negative integers and for all integers $k\geq\max\{m,p\}$,
\[F(k+l)-F(k) = \frac{mn}{(k+1)(k+n+1)} = G(k+l)-G(k).\]
Since $G-F$ is holomorphic and polynomially bounded on $\K\cap\H$, Lemma \ref{L:preriodic_hol} implies that $G-F$ is a constant function on this set. Therefore, there is a constant $c$ such that for all $\zeta\in\K\cap\H$,
\begin{align}
\label{Eqn:formula_G}
G(\zeta)=c-\sum_{j=0}^{\infty}\frac{mn}{(jl+\zeta+1)(jl+\zeta+n+1)}.
\end{align}
Lemma \ref{L:identity_result} shows that the above identity in fact holds for all $\zeta\in\overline{\K}$. In particular, the pole $\zeta=-1$ of the right-hand side of \eqref{Eqn:formula_G} does not lie in $\overline{\K}$. It follows that $-1<p/2$, which gives $p>-2$, hence, $p\geq -1$ since it is an integer. Note that \eqref{Eqn:formula_G} also shows that $G$ is an increasing function on $\K\cap\R$.

If $p-1\geq m$, then setting $k=m-1$ (respectively, $k=m$) in \eqref{Eqn:comm_A} and \eqref{Eqn:comm_B} gives
$G(m+l-1) = \frac{n}{n+m}$ (respectively, $G(m+l) = \frac{mn}{(m+1)(m+n+1)}$). It follows that $G(m+l-1)>G(m+l)$, which contradicts the fact that $G$ is increasing on $\K\cap\R$. Consequently, $p\leq m$. 

Our argument so far has shown $-1\leq p \leq m$. Suppose $p\geq 2$. Then $p-1$ belongs to $\overline{\K}$, which implies that $G$ is continuous at $p-1$. The defining formula for $G$ gives $G(p-1)=0$. This together with \eqref{Eqn:diff_eqn_G} shows that $G(p+l-1)=mn/(p(p+n))$. On the other hand, setting $k=p-1$ in \eqref{Eqn:comm_A} and \eqref{Eqn:comm_B} gives $G(p+l-1) = \frac{p+n-m}{p+n}$. Consequently,
\[\frac{mn}{p(p+n)} = \frac{p+n-m}{p+n},\]
which implies $p=m$. It follows that (a) holds, since $p-l=m-n$.

We now suppose that (a) does not hold. Then $-1\leq p\leq\min\{1, m-1\}$. Consider first the case $p\geq 0$ (that is, $p=0$ or $p=1$). Then $p\in\overline{\K}$ and hence, by \eqref{Eqn:diff_eqn_G},
\[G(p+l)-G(p) = \frac{mn}{(p+1)(p+n+1)}.\] 
Setting $k=p$ in \eqref{Eqn:comm_A} and \eqref{Eqn:comm_B} gives $G(p+l)-G(p)=(p+n-m+1)/(p+n+1)$. It follows that
\[\frac{mn}{(p+1)(p+n+1)} = \frac{p+n-m+1}{p+n+1},\]
which forces $p=m-1$. Therefore, (b) holds.

Now consider $p=-1$. Since $0\in\K$, \eqref{Eqn:diff_eqn_G} gives
$G(l)-G(0) = mn/(n+1)$. On the other hand, setting $k=0$ in \eqref{Eqn:comm_A} and \eqref{Eqn:comm_B} gives
$G(l)-G(0) = (n-m+1)/(n+1)$. We then have
\[\frac{mn}{n+1} = \frac{n-m+1}{n+1},\]
which implies $m=1$. Therefore, (c) holds.

To complete the proof, we assume that $\phi$ is bounded and show that neither (b) or (c) may occur. The boundedness of $\phi$ implies that $\widehat{\phi}$ is holomorphic on $\H$, which implies that $G$ is holomorphic on $(p/2-1)+\H$. By \eqref{Eqn:formula_G}, the pole $\zeta=-1$ of the right-hand side must lie outside of $(p/2-1)+\H$, which gives $p\geq 0$. 

If $p=0$, then \eqref{Eqn:formula_G} and the defining formula for $G$ give
\[(2\zeta+2)\widehat{\phi}(2\zeta+2) = c - \sum_{j=0}^{\infty}\frac{mn}{(jl+\zeta+1)(jl+\zeta+n+1)}.\]  This shows that $\lim_{t\to -1^{+}}(2t+2)\widehat{\phi}(2t+2)=-\infty$, which contradicts \eqref{Eqn:Mellin_estimate}. 

If $p\geq 1$, then $G$ is holomorphic on $(p/2-1)+\H$, which contains $p-1$. The argument that we have seen earlier shows that $m=p$. In any case, neither (b) nor (c) may happen. This completes the proof of the proposition.
\end{proof}

We are now ready for the proof of our main result.

\begin{proof}[Proof of Theorem {\ref{main}}]
Assume that $g$ is not constant and $[T_{g},T_{f}]=0$. By taking adjoints, we also have $[T_{\bar{g}},T_{\bar{f}}]=0$. Since neither $f$ nor $\bar{f}$ is holomorphic, Theorem \ref{T:ACR} implies that neither $g$ or $\bar{g}$ is holomorphic. We write
\[g(re^{i\theta}) = \sum_{j=-\infty}^{n}g_j(r)e^{ij\theta}\] for some integer $n$ and $g_n$ is not identically zero. Write $f_1(z)=z^{l}+\textit{lower powers of } z$, for some integer $l\geq 1$ and
$f_2(z) = c+\textit{higher powers of } z$ for some $c\neq 0$.
By considering the highest order terms in the identity $[T_{g},T_{f}]=0$, we conclude that $[T_{g_n(r)e^{in\theta}}, T_{z^l}]=0$.
This (by Theorem \ref{T:ACR} again) implies that $n\geq 0$ and $g_n(r)e^{in\theta}=a z^{n}$ for some $a\neq 0$. Let $s$ be the largest integer such that $g_{s}(r)e^{is\theta}$ is not a holomorphic monomial. Since $g$ is not holomorphic, such $s$ exists. We write $g=\psi_1+\psi_2$, where
\[\psi_1(z)=a z^{n}+\textit{lower powers of $z$}\] is a holomorphic polynomial and \[\psi_2(re^{i\theta})=g_s(r)e^{is\theta}+\sum_{k=-\infty}^{s-1}g_k(r)e^{ik\theta}. \]
If $n=0$, then $\psi_1$ is constant and hence $[T_{\psi_2},T_{f}]=0$. The above argument will imply that $g_s(r)e^{is\theta}$ is 
holomorphic, which contradicts our choice of $s$. Therefore, $n\geq 1$. 

Since $[T_{\psi_1},T_{f_1}]=0$, we obtain
\begin{align*}
0 = [T_{g},T_{f}] = [T_{g},T_{f_1}]+[T_{g},T_{\bar{z}^m\bar{f_2}}] = [T_{\psi_2},T_{\varphi_1}]+[T_{g},T_{\bar{z}^m\bar{f_2}}].
\end{align*}
Considering the highest order terms again, we conclude that that either $[T_{g_s(r)e^{is\theta}}, T_{z^l}]+ac[T_{z^n}, T_{\bar{z}^m}]=0$, or $[T_{g_s(r)e^{is\theta}}, T_{z^l}]=0$, or $[T_{z^n}, T_{\bar{z}^m}]=0$. Since neither of the last two equalities can
happen (because $n,m,l$ are all positive and $g_s(r)e^{is\theta}$ is not holomorphic), the first equality must hold. By  Lemma \ref{key}, 
we have $n=l$. 
Thus, we have shown that if $g$ is not a constant function and $[T_{g},T_{f}]=0$, then $g$ has the form
$g(z)=az^{l}+\sum_{k=-\infty}^{l-1}g_{k}(r)e^{ik\theta}$ for $z=re^{i\theta}$, where $a$ is a non-zero constant. Now put $\tilde{g}=g-a f$. Then $[T_{\tilde{g}},T_{f}]=0$ and $\tilde{g}$ does not admit the above representation. It follows that $\tilde{g}$ must be a constant function. Therefore, $g=af+b$ for $a,b\in\C$ as required.
\end{proof}

\providecommand{\MR}{\relax\ifhmode\unskip\space\fi MR }
\providecommand{\MRhref}[2]{%
  \href{http://www.ams.org/mathscinet-getitem?mr=#1}{#2}
}
\providecommand{\href}[2]{#2}


\begin{thebibliography}{A{\v{C}}R00}

\bibitem[A{\v{C}}91]{AC}
Sheldon Axler and {\v{Z}}eljko {\v{C}}u{\v{c}}kovi{\'c}, \emph{Commuting
  {T}oeplitz operators with harmonic symbols}, Integral Equations Operator
  Theory \textbf{14} (1991), no.~1, 1--12. \MR{1079815 (92f:47018)}

\bibitem[A{\v{C}}R00]{ACR}
Sheldon Axler, {\v{Z}}eljko {\v{C}}u{\v{c}}kovi{\'c}, and N.V. Rao,
  \emph{Commutants of analytic {T}oeplitz operators on the {B}ergman space},
  Proc. Amer. Math. Soc. \textbf{128} (2000), no.~7, 1951--1953. \MR{1694299
  (2000m:47035)}

\bibitem[AL17]{AL}
H.~{Alsabi} and I.~{Louhichi}, \emph{{On the commutativity of a certain class
  of Toeplitz operators}}, ArXiv e-prints (2017).

\bibitem[BH64]{Brown1963}
Arlen Brown and Paul~R. Halmos, \emph{Algebraic properties of {T}oeplitz
  operators}, J. Reine Angew. Math. \textbf{213} (1963/1964), 89--102.
  \MR{0160136 (28 \#3350)}

\bibitem[{\v{C}}R98]{CR}
{\v{Z}}eljko {\v{C}}u{\v{c}}kovi{\'c} and N.V. Rao, \emph{Mellin transform,
  monomial symbols, and commuting {T}oeplitz operators}, J. Funct. Anal.
  \textbf{154} (1998), no.~1, 195--214. \MR{1616532 (99f:47033)}

\bibitem[LRY09]{LRY}
Issam Louhichi, Nagisetti~V. Rao, and Abdel Yousef, \emph{Two questions on
  products of {T}oeplitz operators on the {B}ergman space}, Complex Anal. Oper.
  Theory \textbf{3} (2009), no.~4, 881--889. \MR{2570117}

\bibitem[Rem98]{Rem}
Reinhold Remmert, \emph{Classical topics in complex function theory}, Graduate
  Texts in Mathematics, vol. 172, Springer-Verlag, New York, 1998, Translated
  from the German by Leslie Kay. \MR{1483074}

\bibitem[You09]{Y}
A.~Yousef, \emph{Two problems in the theory of {T}oeplitz operators on the
  {B}ergman space}, ProQuest LLC, Ann Arbor, MI, 2009, Thesis
  (Ph.D.)--University of Toledo.

\bibitem[Zhu07]{ZhuAMS2007}
Kehe Zhu, \emph{Operator theory in function spaces}, second ed., Mathematical
  Surveys and Monographs, vol. 138, American Mathematical Society, Providence,
  RI, 2007. \MR{2311536}

\end{thebibliography}
\end{document}